\documentclass[12pt]{article}
\usepackage{amssymb,amsthm,latexsym,amsmath,amscd}

\newtheorem{theorem}{Theorem}

\newtheorem{corollary}{Corollary}
\newtheorem*{theorema}{Theorem}

\newtheorem*{mainlemma}{Main Lemma}

\theoremstyle{definition}

\theoremstyle{definition}

\theoremstyle{definition}

\theoremstyle{definition}

\theoremstyle{remark}

\theoremstyle{definition}

\theoremstyle{definition}

\theoremstyle{definition}

\theoremstyle{definition}
\newtheorem{step}{Step}

\theoremstyle{definition}

\theoremstyle{definition}
\newtheorem*{conjecturea}{Conjecture}

\newcommand{\Dom}{\operatorname{Dom}}

\newcommand{\Real}{\operatorname{Re}}

\newcommand{\dee}{\partial}
\newcommand{\dba}{\bar{\partial}}
\newcommand{\ddba}{\partial \bar{\partial}}

\newcommand{\ipa}[1]{\partial_{#1}}
\newcommand{\pa}[2]{\frac{\partial{#1}   } {\partial{#2}            } }

\newcommand{\deltanto}[2]{\delta^{-\frac{#1}{#2}}}

\begin{document}
\title{Monotonicity of Subelliptic Estimates \\ on Rigid Pseudoconvex Domains}
\author{Jae-Seong Cho\\
Department of Mathematics\\
Purdue University\\
cho1@math.purdue.edu}
\date{\today}
\maketitle

\begin{abstract}
In this paper we will present  monotonicity of
subelliptic estimates involving Levi forms on rigid pseudoconvex domains. As an application
of monotonicity, we will show that if a rigid domain is given by a
sum of the squares of monomials and if the domain is of finite type, then the sharp
subelliptic estimate of this domain equals the reciprocal of the D'Angelo's $1$-type.

\end{abstract}

\section{Introduction}
This paper mainly concerns  the following  class of domains. Let $\Omega$ be a pseudoconvex domain in $\mathbb
C^{n+1}$ and let the origin be  a boundary point of $\Omega$. Following the terminology in \cite{BRT}, we say
that $\Omega$ is a \emph{rigid pseudoconvex domain} near the origin if there exists a complex coordinate
$(z_1,\dots,z_{n+1})$ such that  $\Omega$ is defined near the origin by
\begin{equation}\label{E:Defining}
\Omega = \{(z_1,\dots,z_{n+1}) \in \mathbb C^{n+1} \;: \;\Real z_{n+1} +u(z_1,\dots,z_n)<0 \},
\end{equation}
where $u$ is a plurisubharmonic function depending only on the first $n$-variables with $u(0)=0$.
We say that $u$ is the \emph{mixed term} of the boundary
defining function of $\Omega$ near the origin.

Let $\Omega$ be
a smoothly bounded pseudoconvex domain in $\mathbb C^{n+1}$ and let $p_0$ be a
point on the boundary of $\Omega$.
We say that  \emph{subelliptic estimate of order $\epsilon
>0$}  holds on $(0,1)$-forms at $p_0$ if there exist a constant $C$ and a
neighborhood $U$ of $p_0$ such that the estimate
\begin{equation}\label{E:SubEllipEst}
|||\varphi|||_{\varepsilon}^2 \leq C(||\bar{\partial}\varphi||^2 +
||\bar{\partial}^{\ast}\varphi||^2+||\varphi||^2)
\end{equation}
is valid for all  $\varphi \in \mathcal{D}^{0,1}(U)$. Here $|||\cdot|||_{\epsilon}$ denotes the tangential
Sobolev norm of order $\epsilon$ and $\mathcal{D}^{0,1}(U)$ refers to the set of smooth $(0,1)$-forms $\varphi$
in $\Dom(\dba) \cap \Dom(\dba^{\ast})$. The \emph{D'Angelo's $1$-type} of the boundary of $\Omega$ at $p_0$ is
defined by
\begin{equation}
T(b\Omega,p_0)= \sup_{z} \frac{v(z^{\ast}r)}{v(z)}.
\end{equation}
Here $z$ runs over the set of all germs of parameterized complex analytic curves at $p_0$
and $v(\cdot)$ denotes the vanishing order. Even though the type function, $p \to
T(b\Omega,p)$, is neither upper nor lower semi-continuous, D'Angelo showed that if
$T(b\Omega,p_0)$ is finite, then it  is locally bounded near $p_0$ \cite{DA82}.

Catlin showed that  if $\Omega$ is a smoothly bounded pseudoconvex domain in $\mathbb C^{n+1}$ and $p_0 \in
b\Omega$, then $T(b\Omega,p_0)$ is finite if and only if there is  a subelliptic estimate of some order $\epsilon
>0$ on $(0,1)$-forms at $p_0$. More precisely, for necessary condition of
subellipticity  he \cite{Ca83} showed that if $T(b\Omega,p_0)$ is finite and a subelliptic estimate of order
$\epsilon$ of the form \eqref{E:SubEllipEst} holds, then $\epsilon$ must satisfy
$\epsilon \leq \frac{1}{T(b\Omega,p_0)}$.
For sufficiency he reduced the problem to construct a family of plurisubharmonic functions
$\{\lambda_{\delta}\}$ with large Hessians near the boundary.

\begin{theorema}[{Catlin \cite[Theorem 2.2]{Ca87}}]\label{T:Ca87}
Let $p_0$ be a point on the boundary of a smoothly bounded pseudoconvex  domain in $\mathbb C^{n+1}$. Let
$\Omega_{\delta}=\{z \mid  r(z) <\delta \}$ and let $S_{\delta}=\{ z \mid -\delta <r(z) <\delta \}$. Suppose that
there exist a neighborhood $\tilde U$ of $p_0$, a parameter $\epsilon$ with $0<\epsilon\leq \frac{1}{2}$, and a
constant $c>0$ such that for any sufficiently small $\delta>0$ there exists a smooth plurisuharmonic function
$\lambda_{\delta}$ in $\tilde U \cap \Omega_{\delta}$ satisfying the following properties:
\begin{align}
& \textup{(i)} \quad |\lambda_{\delta}| \leq 1 \quad \mbox{on $\tilde U \cap \Omega_{\delta}$}
\label{E:uniformbounded}\\
& \textup{(ii)} \quad \sum_{i,j=1}^{n+1} \frac{\partial^2 \lambda_{\delta}}{\partial z_i
\partial \bar z_j} s_i \bar{s}_j \geq c \delta^{-2\epsilon}\sum_{i=1}^{n+1}|s_i|^2, \quad
s_i \in \mathbb C,  \quad\mbox{on \;$\tilde U\cap S_{\delta}$}. \label{E:PSH}
\end{align}
Then there exists a neighborhood $\tilde U'$ of $p_0$ with $\tilde U'\subset \subset \tilde
U$ such that a subelliptic estimate of order $\epsilon$ holds in $\tilde U'$.
\end{theorema}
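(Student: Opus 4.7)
The plan is to combine a Littlewood--Paley decomposition of $\varphi$ in the tangential frequency variables with the weighted Morrey--Kohn--H\"ormander basic estimate, applied at each dyadic scale using the corresponding plurisubharmonic weight $\lambda_\delta$. The guiding heuristic is an uncertainty-principle pairing: tangential frequency $\sim 2^k$ matches the normal scale $\delta_k=2^{-k}$, and at that scale the Hessian hypothesis (ii) supplies a gain of $\delta_k^{-2\epsilon}=2^{2k\epsilon}$, which is exactly what the tangential Sobolev $\epsilon$-norm demands.

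Concretely, choose a smooth partition of unity $\{\psi_k(\xi')\}_{k\geq 0}$ in the dual of the tangential variables, with $\psi_k$ supported on $|\xi'|\sim 2^k$ for $k\geq 1$, and let $\Psi_k$ denote the associated tangential pseudodifferential operators. Standard calculus gives
\begin{equation*}
|||\varphi|||_\epsilon^2 \;\asymp\; \|\varphi\|^2 + \sum_{k\geq 1} 2^{2k\epsilon}\,\|\Psi_k\varphi\|^2.
\end{equation*}
For each $k\geq 1$, set $\lambda_k:=\lambda_{\delta_k}$ and apply the weighted Morrey--Kohn--H\"ormander identity with weight $e^{-\lambda_k}$ to $\Psi_k\varphi$; since $\Psi_k$ is tangential, $\Psi_k\varphi$ still lies in $\Dom(\dba^{\ast})$, and the Levi boundary contribution is nonnegative by pseudoconvexity. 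Bound (i) forces $e^{\pm\lambda_k}\asymp 1$, so inserting a smooth normal cutoff $\chi_k$ that isolates the piece of $\Psi_k\varphi$ lying in $\tilde U\cap S_{\delta_k}$ and applying (ii) produces the scale-by-scale inequality
\begin{equation*}
c\,2^{2k\epsilon}\,\|\chi_k\Psi_k\varphi\|^2 \;\leq\; C\bigl(\|\dba\Psi_k\varphi\|^2 + \|\dba^{\ast}\Psi_k\varphi\|^2 + \|\Psi_k\varphi\|^2 + E_k\bigr),
\end{equation*}
where $E_k$ gathers errors from the normal cutoff and from replacing $\dba^{\ast}_{\lambda_k}$ by $\dba^{\ast}$ (a first-order perturbation of norm $O(\delta_k^{-\epsilon})$, absorbed by the gain).

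The main obstacle is the bookkeeping required when summing these inequalities in $k$. The commutators $[\dba,\Psi_k]$ and $[\dba^{\ast},\Psi_k]$ are tangentially of order zero but absorb one derivative of $\varphi$; by standard pseudodifferential calculus together with the almost-orthogonality of the family $\{\Psi_k\}$, their summed contribution is controlled by $\|\varphi\|^2$ plus a small multiple of $|||\varphi|||_\epsilon^2$ that can be absorbed on the left. The portion of $\Psi_k\varphi$ supported outside $S_{\delta_k}$ sits in the interior, where the ellipticity of $\dba\oplus\dba^{\ast}$ already yields a full tangential derivative, much more than the $\epsilon$-gain required, so that contribution is benign. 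The structural reason one needs the \emph{entire} family $\{\lambda_\delta\}$ rather than a single weight is that at scale $\delta_k$ only $\lambda_{\delta_k}$ is invoked, and only its first and second derivatives enter the identity; the prohibitive factor $|\nabla^3\lambda_\delta|$, which would scale badly in $\delta$, is never incurred. Summing in $k$, adding the trivial bound for the low-frequency piece $\Psi_0\varphi$, and shrinking the neighborhood to $\tilde U'\subset\subset\tilde U$ to absorb cutoff errors then yields the subelliptic estimate \eqref{E:SubEllipEst} on $\tilde U'$.
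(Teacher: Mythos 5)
First, a point of orientation: the paper does not prove this statement. It is quoted verbatim as Catlin's Theorem 2.2 from \cite{Ca87} and used as a black box; the paper's own work (the Main Lemma) consists of constructing the family $\{\lambda_\delta\}$ and then invoking this theorem. So the only meaningful comparison is with Catlin's original argument, and your outline --- dyadic Littlewood--Paley decomposition in the tangential frequencies, the weighted Morrey--Kohn--H\"ormander identity at scale $\delta_k=2^{-k}$ with weight built from $\lambda_{\delta_k}$, and interior ellipticity off the strip $S_{\delta_k}$ --- is indeed the strategy of that proof. The structural remarks (why a whole family of weights is needed, why only two derivatives of $\lambda_\delta$ ever enter) are correct and well taken.

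Two steps, however, are asserted rather than argued, and neither follows from hypotheses (i)--(ii) as you have written them. (a) You dismiss the discrepancy between $\dba^{\ast}_{\lambda_k}$ and $\dba^{\ast}$ as ``a first-order perturbation of norm $O(\delta_k^{-\epsilon})$.'' That perturbation is the zeroth-order operator $u\mapsto u\lrcorner\,\partial\lambda_{\delta_k}$, and nothing in the hypotheses bounds $\nabla\lambda_{\delta}$: a function with $|\lambda_\delta|\leq 1$ and Hessian $\geq c\delta^{-2\epsilon}|s|^2$ may a priori have a much larger gradient, so this term cannot simply be absorbed by the gain. The standard repair --- and the reason hypothesis (i) is stated as a uniform bound --- is to exponentiate: since $\partial\dba\, e^{\lambda}=e^{\lambda}\bigl(\partial\dba\lambda+\partial\lambda\wedge\dba\lambda\bigr)$, the gradient term appears with a favorable sign and is controlled by the Hessian term itself (the ``self-bounded gradient'' device), yielding the key inequality $\int\sum\lambda_{i\bar j}u_i\bar u_j\leq C\bigl(\|\dba u\|^2+\|\dba^{\ast}u\|^2+\|u\|^2\bigr)$ with $C$ independent of $\lambda$. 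Without some version of this, your scale-by-scale inequality does not follow. (b) Calling the off-strip contribution ``benign'' conceals the crux of the bookkeeping. Interior ellipticity at distance $d$ from the boundary carries a constant of order $d^{-2}$, so for the piece of $\Psi_k\varphi$ living at depth $\sim\delta_k$ it yields $2^{2k}\|(1-\chi_k)\Psi_k\varphi\|^2\lesssim\delta_k^{-2}\bigl(\|\dba\varphi\|^2+\|\dba^{\ast}\varphi\|^2+\|\varphi\|^2\bigr)$, i.e.\ a bound with no decay in $k$; the resulting sum $\sum_k 2^{2k\epsilon}\cdot O(1)$ diverges. One must further decompose the off-strip region into dyadic slabs $\{2^{-j-1}<-r<2^{-j}\}$ with $j<k$ and exploit the mismatch between tangential frequency $2^k$ and depth $2^{-j}$ to extract a summable geometric factor before summing over $j$ and $k$; almost-orthogonality of the $\Psi_k$ alone does not supply this. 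Both points are repairable along the lines of \cite{Ca87}, but as written they are genuine gaps.
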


In this paper we are primarily interested in a construction of a family of plurisubharmonic functions,
$\{\lambda_{\delta}\}$, satisfying \eqref{E:uniformbounded} and \eqref{E:PSH}, for a rigid pseudoconvex domain
defined by \eqref{E:Defining}. The main idea of this paper is to resolve $\ddba u$ at degenerated points by
adding small cutoff functions in a uniform way.
 Since the mixed term $u$ is independent of
the last variable $z_{n+1}$, there is a natural projection from the boundary of a rigid domain to a neighborhood
of the origin in $\mathbb C^n$. Under this projection one can interpret the Levi form of the boundary of $\Omega$
in terms of $\ddba u$. This observation enables us to derive the following lemma, a modified version of
\cite[Theorem 2.2]{Ca87} for rigid pseudoconvex domains.
\begin{mainlemma}\label{L:Main}
Let $u(z)$ be a smooth plurisubharmonic function in a neighborhood $V$ of the origin in
$\mathbb C^{n}$ with $u(0)=0$ and let $\Omega$ be a rigid pseudoconvex domain defined by
\eqref{E:Defining} near the origin in $\mathbb C^{n+1}$. Suppose that there exist a
neighborhood $U \subset \subset V$ of the origin in $\mathbb C^n$, a constant $C>0$, and a
parameter $\epsilon$ with $0< \epsilon \leq  \frac{1}{2}$ so that for each sufficiently small $\delta
>0$ there exists a smooth function $\rho_{\delta}(z)$ on $U$ with the following properties:
\begin{itemize}
\item [\textup{(i)}] $0 \leq \rho_{\delta}(z)  \leq 1$ for all $z \in U$,

\item [\textup{(ii)}] for all $z\in U$ and for all $L=s_1\pa{}{z_1} + \dots + s_n
\pa{}{z_n}$, $s_i \in \mathbb C$
\begin{equation}\label{E:resolve}
\ddba \left( \frac{u}{\delta} + \rho_{\delta}\right) (L,\bar{L})(z) \geq
C\delta^{-2\epsilon} |L|^2.
\end{equation}
\end{itemize}
Then  a subelliptic estimate of order $\epsilon$ holds at the origin for $\Omega$.
\end{mainlemma}

We will give the proof of Main Lemma in Section \ref{S:MainLemma}. As an immediate consequence of Main Lemma, we
obtain the following monotone property of subelliptic estimates on two rigid domains.
\begin{theorem}\label{T:Mono}
Let $u_1$ and $u_2$ be plurisubharmonic functions defined on a neighborhood $V$ of the origin in $\mathbb C^n$
with $u_1(0)=u_2(0)=0$. Let $\Omega_1$ and $\Omega_2$ be rigid pseudoconvex domains with mixed terms, $u_1$ and
$u_2$, respectively, near the origin in $\mathbb C^{n+1}$. Suppose that there exist a neighborhood $U \subset
\subset V$ in $\mathbb C^n$, a constant $C>0$, and a parameter $\epsilon$ with $0<\epsilon\leq \frac{1}{2}$ such
that for each sufficiently small $\delta
>0$ there exists a smooth function $\rho_{\delta}(z)$
satisfying \textup{(i)} and \textup{(ii)}  for $u_2$ in Main Lemma. If for all $z \in V$,
\begin{equation}
\ddba u_1(L,\bar L)(z) \geq \ddba u_2(L,\bar L)(z), \quad \mbox{$L=\sum_{i=1}^n s_i
\pa{}{z_i}$},
\end{equation}
then a subelliptic estimate of order $\epsilon$ holds at the origin for both $\Omega_1$ and
$\Omega_2$.
\end{theorem}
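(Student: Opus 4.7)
The plan is to apply the Main Lemma to each of the two domains, using the \emph{same} bump function $\rho_\delta$ for both. By assumption, the family $\{\rho_\delta\}$ already satisfies conditions (i) and (ii) of the Main Lemma for $u_2$, so the conclusion for $\Omega_2$ is immediate from the Main Lemma with no further work.

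The substantive step is to verify that the same $\rho_\delta$ also works for $u_1$. Condition (i) is automatic since the function has not changed. For condition (ii), I would use linearity of $\ddba$ to write
\[
\ddba\!\left(\frac{u_1}{\delta} + \rho_\delta\right)(L,\bar L)(z) = \frac{1}{\delta}\,\ddba u_1(L,\bar L)(z) + \ddba \rho_\delta(L,\bar L)(z),
\]
and similarly for $u_2$. Since $\delta > 0$, the hypothesis $\ddba u_1(L,\bar L)(z) \geq \ddba u_2(L,\bar L)(z)$ for all $z \in V$ can be divided through by $\delta$, and then adding the common term $\ddba \rho_\delta(L,\bar L)(z)$ preserves the inequality. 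Combined with property (ii) for $u_2$, this yields
\[
\ddba\!\left(\frac{u_1}{\delta} + \rho_\delta\right)(L,\bar L)(z) \;\geq\; \ddba\!\left(\frac{u_2}{\delta} + \rho_\delta\right)(L,\bar L)(z) \;\geq\; C\,\delta^{-2\epsilon}|L|^2,
\]
which is exactly (ii) of the Main Lemma for $u_1$. Applying the Main Lemma a second time delivers the subelliptic estimate of order $\epsilon$ at the origin for $\Omega_1$.

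There is essentially no genuine obstacle here: the content of the theorem is that the Main Lemma can be fed with a $\rho_\delta$ tailored to the \emph{smaller} Levi form, and the monotonicity passes through transparently because $\ddba$ is linear and $1/\delta$ is positive. The only things to double-check are the standing hypotheses of the Main Lemma — in particular that $u_1$ is plurisubharmonic so that $\Omega_1$ really is a rigid pseudoconvex domain — but this is already assumed in the statement. Thus the whole proof reduces to one display computation and two invocations of the Main Lemma.
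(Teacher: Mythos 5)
Your proposal is correct and is exactly the argument the paper intends: the paper presents Theorem \ref{T:Mono} as an ``immediate consequence'' of the Main Lemma without writing out details, and the content is precisely your observation that the same $\rho_\delta$ verifies condition (ii) for $u_1$ by linearity of $\ddba$ and positivity of $1/\delta$. Nothing is missing.
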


In Section \ref{S:Monomial}, as an application of the monotone property,  we will consider the largest
subelliptic gain of
 a rigid pseudoconvex domain whose Levi form is greater than or equal
to the one of a diagonal domain (Theorem \ref{T:AlmDiag}). As a corollary of Theorem
\ref{T:AlmDiag}, We will provide an answer to the D'Angelo's conjecture when
the mixed term of a rigid pseudoconvex domain is a sum of
the squares of monomials (Corollary \ref{C:Monomial}).
\begin{conjecturea}[D'Angelo \cite{DA93}]
Let $\mathcal O_n$ be the ring of germs of holomorphic functions at the origin $\mathbb
C^n$ and let $I$ be an ideal generated by $f_1,\dots,f_l \in \mathcal O_n$ with $f_j(0)=0$
$j=1,\dots,l$. Let $m(I)$ denote  $\dim_{\mathbb
C}\mathcal O_n/I$. Let $\Omega \subset \subset \mathbb C^{n+1}$ be a rigid domain whose
boundary near the origin is defined by
$r(z')=2\Real z_{n+1} + \sum_{i=1}^l |f_j(z)|^2$,
where $z=(z_1,\dots,z_n)$ and $z'=(z_1,\dots,z_{n+1})$. If $m(I)$ is finite, then a
subelliptic estimate holds for
\begin{equation}
\frac{1}{2m(I)} \leq \epsilon \leq \frac{1}{T(b\Omega,0)}.
\end{equation}
\end{conjecturea}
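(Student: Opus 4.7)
My plan is to combine the monotone property (Theorem~\ref{T:Mono}) with the monomial result (Corollary~\ref{C:Monomial}) via a reduction to an initial monomial ideal of $I$. Catlin's necessary condition already supplies the upper bound $\epsilon \leq 1/T(b\Omega,0)$, so the substantive content of the conjecture is to produce a subelliptic estimate at $\epsilon = 1/(2m(I))$ at the origin for a general $\fm$-primary ideal.

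First I would exploit the fact that $I$ is $\fm$-primary, since $m(I)<\infty$. After a generic linear change of coordinates, pick a positive weight vector $w=(w_1,\dots,w_n)$ such that the $w$-initial ideal $J := \mathrm{in}_w(I)$ is a monomial ideal with $m(J) = m(I)$; this is provided by the local standard-basis theory of Hironaka/Grauert in $\mathcal{O}_n$. Replace $f_1,\dots,f_l$ by a $w$-standard basis of $I$; then each generator may be written as $f_j = m_j + h_j$, where $m_j$ is a single monomial of $w$-weighted degree $d_j$ (so the $m_j$ generate $J$) and $h_j$ consists of strictly higher $w$-weighted terms. The natural candidate for a comparison mixed term is
\begin{equation*}
u_2(z) = c\sum_j |m_j(z)|^2
\end{equation*}
for a small $c>0$ to be chosen.

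Next I would attempt to verify the Levi-form domination
\begin{equation*}
\ddba \Bigl(\sum_j |f_j|^2\Bigr)(L,\bar L)(z) \;\geq\; \ddba u_2(L,\bar L)(z)
\end{equation*}
in a small neighborhood of the origin. Expanding $|f_j|^2 = |m_j|^2 + 2\Real(m_j\bar h_j) + |h_j|^2$ and using quasi-homogeneity under the scaling $z_i \mapsto t^{w_i}z_i$, the cross and higher-order contributions vanish to strictly higher $w$-weighted order than the diagonal term $\ddba|m_j|^2$, so on a sufficiently small $w$-weighted polydisc and for $c$ small enough one hopes to absorb them. Once this domination is established, Theorem~\ref{T:Mono} applied to $u_1 = \sum_j|f_j|^2$ and $u_2$, combined with Corollary~\ref{C:Monomial} applied to $u_2$ (which yields $\epsilon = 1/(2m(J)) = 1/(2m(I))$), would conclude the proof.

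The main obstacle is precisely this Levi-form domination. Pointwise, $\ddba(m_j\bar h_j)$ is not semidefinite and can be of the same order as $\ddba|m_j|^2$ near the very degeneracy loci of $u_2$ where a comparison is hardest, so the formal counting of $w$-weighted orders is not enough by itself. A more robust route, and the one I would actually pursue, is to bypass the intermediate domain and construct the resolving family $\{\rho_\delta\}$ required by the Main Lemma \emph{directly} from the $w$-standard basis data: place bounded cutoffs at the nonvanishing loci of the leading monomials $m_j$ indexed by the vertices of the Newton diagram of $J$, and verify \eqref{E:resolve} case by case on $w$-weighted dyadic annuli. This is the direct generalization of the strategy used for Corollary~\ref{C:Monomial}, but carrying it out uniformly as the analytic tails $h_j$ vary across boundary points appears to be the genuinely new analytic ingredient the full conjecture requires, and is where I would expect the argument to remain incomplete.
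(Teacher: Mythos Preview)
The paper does not prove this statement: it is presented as an open conjecture, and the paper only establishes the special case in which every $f_j$ is a monomial (Corollary~\ref{C:Monomial}). So there is no ``paper's own proof'' to compare against, and you should not expect your argument to close into a complete proof using only the tools developed here.

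That said, your strategy is a natural extrapolation of the paper's method, and you diagnose its breaking point correctly. Reducing to an initial monomial ideal $J$ with $m(J)=m(I)$ and then invoking Theorem~\ref{T:Mono} is exactly the kind of move the monotonicity framework invites. The genuine gap is the one you name yourself: the Levi-form domination $\ddba\!\bigl(\sum_j|f_j|^2\bigr)\ge c\,\ddba\!\bigl(\sum_j|m_j|^2\bigr)$ need not hold pointwise near $0$, because the mixed terms $\ddba(m_j\bar h_j)$ are of the same weighted order as $\ddba|m_j|^2$ in directions transverse to the monomial's zero set, and are indefinite. Weighted-order counting therefore does not yield the inequality, and this is precisely why the paper restricts to the monomial case. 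Your alternative---building $\rho_\delta$ directly from standard-basis data on weighted dyadic annuli---is a reasonable program, but as you acknowledge it is not carried out; it is genuinely beyond what the paper provides.

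One small correction: Corollary~\ref{C:Monomial} applied to the monomial comparison domain yields $\epsilon = 1/T(b\Omega_2,0)$, not $1/(2m(J))$; these are generally different (e.g.\ for $J=(z_1^2,z_2^2)$ one has $T=4$ but $2m(J)=8$). This does not damage the intended lower bound, since D'Angelo's inequality $T\le 2m(I)$ gives $1/T(b\Omega_2,0)\ge 1/(2m(J))=1/(2m(I))$, but you should state it that way rather than equate the two quantities.
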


\medskip
\noindent \emph{Acknowledgement}. I am greatly indebted to David W. Catlin for sharing his
original idea. He has generously put his energy and time into explaining his deep theory
to me ever since I came to Purdue University as a post-doc at 2006. Without this long
discussion and inspiration this paper would not come out. I also would like to express my
thanks to John P. D'Angelo for encouraging my work constantly. Furthermore, he kindly
pointed out his recent preprint about the effectiveness of Kohn's algorithm, which was
very helpful to finish this work.
\section{Proof of Main Lemma}\label{S:MainLemma}

Let $\Omega \subset \mathbb C^{n+1}$ be  a rigid domain whose boundary is defined near the origin by
\begin{equation}
r(z')=2\Real z_{n+1} + u(z),
\end{equation}
where $u(z)$ is a smooth plurisubharmonic function, depending only on $z=(z_1,\dots,z_n)$, with $u(0)=0$.
Let  $\tilde U$ be an open set in $\mathbb C^{n+1}$ defined by
$ \tilde U = U \times \mathbb C  \subset \mathbb C^{n+1}$,
and let
\begin{align}
\Omega_{\delta} &= \{z' \in \mathbb C^{n+1} \mid r(z') < \delta \}\\
S_{\delta} &=\{z' \in \mathbb C^{n+1} \mid -\delta < r(z') \leq 0 \}.
\end{align}
For simplicity, we set
\begin{equation}
\ipa{i} = \mbox{$\pa{}{z_i}$} \quad i=1,\dots, n+1.
\end{equation}
Let denote a constant vector field in $\mathbb C^n$ by
\begin{equation}
L=\sum_{i=1}^ns_i\ipa{i} \quad  s_i \in \mathbb C.
\end{equation}
Consider $(1,0)$-vector fields $L_1,\dots,L_{n+1}$ on $ \tilde U$, defined by
\begin{align}
L_i\hfill \;&= \mbox{$\ipa{i} - \frac{r_{z_i}}{r_{z_{n+1}}}\;\ipa{n+1} = \ipa{i} -u_{z_i}\ipa{n+1}$},
\quad i=1,\dots,n\\
 L_{n+1} &= \ipa{n+1}.\notag
\end{align}
Note that $L_ir=0$ with $i=1,\dots,n$ for all $z' \in \tilde U$.  Let us denote
\begin{equation*}
L'  =\sum_{i=1}^{n+1}s_iL_i, \quad \tilde L  =\sum_{i=1}^{n+1}s_i \ipa{i}, \quad  s_i \in \mathbb C.
\end{equation*}
Since $u_{z_i}$ is bounded on $U \subset \subset V$ for $i=1,\dots,n$, it follows that
there exist constants $C_1,C_2>0$, depending only on $U$, so that
\begin{equation}\label{E:NormEqui}
C_1|L'|^2 \leq \sum_{i=1}^{n+1} |s_i|^2 \leq C_2|L'|^2.
\end{equation}
\begin{step} \label{S:StrPos}
Let $\tilde \lambda_{\delta}$ be a function defined on $V \times \mathbb C \subset \mathbb
C^{n+1}$  by
\begin{equation}
\tilde \lambda_{\delta}=e^{\frac{r}{\delta}} + e^{-3}\rho_\delta.
\end{equation}
If $z' \in U \times \mathbb C$ with $r(z') \geq -3\delta$, then for all $L'$ at
$z'$,
\begin{equation}\label{E:StrPos}
\ddba \tilde \lambda_{\delta}(L',\bar L')(z') \gtrsim \delta^{-2\epsilon}|L'|^2.
\end{equation}
\end{step}
\begin{proof}[Proof of Step \ref{S:StrPos}]
Note  that
\begin{equation}\label{E:ddbatildelambda}
 \ddba(\tilde \lambda_{\delta}) =
e^{\frac{r}{\delta}} \left[\frac{\dee r \wedge \dba r}{\delta^2} +   \frac {\ddba
u}{\delta}\right] + \frac{ \ddba \rho_{\delta}}{e^3}.
\end{equation}
Since
\begin{align}
&\ddba u(L',\bar{L}')(z')= \ddba u(L,\bar L) (z) \notag\\
&\ddba \rho_{\delta} (L',\bar L')(z') = \ddba \rho_{\delta}(L,\bar L)(z), \label{E:project}
\end{align}
it follows that for any $z'=(z,z_{n+1}) \in \tilde U$ with $r(z')\geq -3\delta $,
\begin{align}
\ddba \tilde \lambda_{\delta} (L',\bar L')(z') & = e^{\frac{r}{\delta}} \left[ \frac{(\dee
r
\wedge \dba r)}{\delta^2}(L',\bar L')(z')\right]\notag\\
&\;\; + e^{\frac{r}{\delta}}\left[\frac{\ddba u}{\delta} (L,\bar L)(z)\right] + \frac{
\ddba \rho_{\delta}}{e^3}(L,\bar L)(z) \notag\\
&= e^{\frac{r}{\delta}} \left[\frac{|s_{n+1}|^2}{\delta^2}\right]
+e^{\frac{r}{\delta}}\left[\frac{\ddba u}{\delta} (L,\bar L)(z)\right] +\frac{
\ddba \rho_{\delta}}{e^3}(L,\bar L)(z) \notag\\
&\geq e^{-3}\left[\frac{|s_{n+1}|^2}{\delta^2}  + \ddba \left( \frac{u}{\delta} +
\rho_{\delta}\right) (L,\bar{L})(z)\right]\notag\\
& \geq e^{-3}\left[\frac{|s_{n+1}|^2}{\delta^2} +C\delta^{-2\epsilon} |L|^2\right].\label{E:longineq}
\end{align}
In fact, the first equality follows from  \eqref{E:ddbatildelambda} and \eqref{E:project}. The second equality
results from the fact that $ L_ir=0$ for $1\leq i \leq n$ and the third inequality is obtained by the fact that
$e^{\frac{r(z')}{\delta}} \geq e^{-3}$ for  $r(z') \geq -3\delta$. The fourth inequality follows from
\eqref{E:resolve}. Since $0<\epsilon \leq \frac{1}{2}$ and $\delta$ is small, it follows from \eqref{E:NormEqui}
and \eqref{E:longineq} that \eqref{E:StrPos} holds.
\end{proof}
\begin{step}\label{S:Second}
Let $p(t)$ be an increasing smooth convex function with $p(t)=0$ for $t \leq e^{-2}$, and $p(t)>0$, $p'(t)>0$,
for  $t>e^{-2}$. Let
\begin{equation}
\lambda_{\delta}(z')=p \circ \tilde \lambda_{\delta}(z').
\end{equation}
Then $\lambda_{\delta}$ is a plurisubharmonic function on $\tilde U$.
\end{step}
\begin{proof}[Proof of Step \ref{S:Second}]
Since $\tilde \lambda_{\delta}=e^{\frac{r}{\delta}} + e^{-3} \rho_{\delta}$ and $0 \leq
\rho_{\delta} \leq 1$, it follows that
\begin{equation}
\tilde \lambda_{\delta}(z') \leq e^{-3} + e^{-3} < e^{-2}, \quad \mbox{for $z' \in \tilde U$
with $r(z') \leq -3\delta$.}
\end{equation}
Since $p(t)=0$ for $t \leq e^{-2}$, we have
\begin{equation}\label{E:Zero}
\lambda_{\delta}(z')=p(\tilde \lambda_{\delta}(z'))=0, \quad \mbox{for $z' \in \tilde U$
with $r(z') \leq -3\delta$.}
\end{equation}
If  $z' \in \tilde U$ with $r(z') \geq -3\delta$, then since $\tilde \lambda_{\delta}(z')$
is plurisubharmonic from Step \ref{S:StrPos} and since $p(t)$ is a convex increasing
function, it follows that $\lambda_{\delta}$ is plurisubharmonic on $z' \in \tilde U$ with
$r(z') \geq -3\delta$. Therefore, combining with \eqref{E:Zero}, we conclude that
$\lambda_{\delta}$ is plurisubharmonic on $\tilde U$.
\end{proof}
\noindent \emph{Continue the proof of Main Lemma \ref{L:Main}.} Since $\tilde \lambda_{\delta}(z') \leq e+e^{-3}$
for $z' \in \tilde U \cap \Omega_\delta$ and since $\lambda(z')=p \circ \tilde \lambda_{\delta}(z')=0$ for $z'
\in \tilde U$ with $r(z') <-3\delta$ by \eqref{E:Zero}, it follows that there exists a constant $C'>0$
independent of $\delta$ such that
\begin{equation}
|\lambda_{\delta}(z')| \leq C', \quad \mbox{for all} \quad z' \in \tilde U \cap
\Omega_{\delta}.
\end{equation}
Since $p$ is a smooth convex increasing  function with $p'(t)>0$ for $t>e^{-2}$, there exists a constant $c>0$ so
that
\begin{equation}\label{E:PStr}
p'(t) \geq c, \quad \mbox{for} \quad e^{-1} \leq t \leq e+e^{-3}.
\end{equation}
Since
\begin{equation}
e^{-1} \leq \tilde \lambda_{\delta}(z') \leq e+e^{-3} \quad \mbox{for $z' \in \tilde U \cap S_{\delta}$},
\end{equation}
it follows from \eqref{E:StrPos} and \eqref{E:PStr} that there exists a constant $C''>0$
independent of $\delta$ such that for all $z' \in S_{\delta} \cap \tilde U$ and all $L'$ at
$z'$,
\[
\ddba \lambda_{\delta} (L',\bar L')(z') \geq C''\delta^{-2\epsilon} |L'|^2.
\]

\section{Sharp Subelliptic Estimates on Rigid Monomial Domains}\label{S:Monomial}
In this section we will consider subellipticity of a rigid domain whose Levi form is
greater than or equal to the one of a diagonal domain. As an application of monotonicity,
we will show that if the mixed term of the boundary of a rigid domain is given by sum of
the squares of monomials and it is of finite type at the origin, then the sharp subelliptic
estimate of this domain is the reciprocal of the $1$-type.

\begin{theorem}\label{T:AlmDiag}
Let $\Omega$ be a rigid pseudoconvex domain near the origin and let $u$ be the mixed term of the boundary of
$\Omega$ near the origin.
Let $m_1,\dots,m_n$ be positive integer.  Suppose that there exists a neighborhood $V$ of
the origin in $\mathbb C^n$ so that for all $z \in V$
\begin{equation}\label{E:AlmDiag}
\ddba u (L,\bar L) (z) \geq \ddba \left(\sum_{i=1}^n |z_i|^{2m_i} \right) (L,\bar L)
\quad \mbox{for} \quad L=\sum_{i=1}^n s_i \ipa{i}.
\end{equation}
Then a subelliptic estimate at the origin for $\Omega$  holds  of order
\begin{equation}
\epsilon = \frac{1}{2 \max\{m_i : 1 \leq i \leq n\}}.
\end{equation}
\end{theorem}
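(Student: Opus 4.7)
The plan is to invoke Theorem \ref{T:Mono} with the diagonal model $u_2(z):=\sum_{i=1}^n |z_i|^{2m_i}$ playing the role of the dominated mixed term, and then to verify the hypotheses of the Main Lemma for $u_2$ itself. Since $u_2$ is plurisubharmonic and the assumption \eqref{E:AlmDiag} is exactly the Levi comparison needed by Theorem \ref{T:Mono} with $u_1=u$, it will be enough to produce, on some neighborhood $U\subset\subset V$ of the origin, a family of smooth functions $\{\rho_\delta\}$ satisfying conditions (i) and (ii) of the Main Lemma for $u_2$ with $\epsilon=\tfrac{1}{2M}$, where $M:=\max_i m_i$.

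For the construction I would fix once and for all a smooth function $\chi\colon[0,\infty)\to[0,1]$ with $\chi(t)=t$ on $[0,t_0]$ for some small $t_0\in(0,1)$ and $\chi$ constant on $[1,\infty)$, and set
\[
\rho_\delta(z) \;:=\; c\sum_{i=1}^n \chi\!\left(\frac{|z_i|^2}{\delta^{1/m_i}}\right),
\]
with a small constant $c>0$ to be chosen. Property (i) is then immediate once $c\leq 1/n$. The substantive step is (ii). Because $u_2$ is diagonal and the $i$-th summand of $\rho_\delta$ depends only on $z_i$, a one-variable computation in each coordinate yields
\[
\ddba\!\left(\frac{u_2}{\delta}+\rho_\delta\right)(L,\bar L) \;=\; \sum_{i=1}^n \delta^{-1/m_i}\bigl[\,m_i^2\,t_i^{m_i-1} + c\,\phi(t_i)\,\bigr]|s_i|^2,
\]
where $t_i:=|z_i|^2/\delta^{1/m_i}$ and $\phi(t):=\chi'(t)+t\chi''(t)$. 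Since $m_i\leq M$ and $\delta$ is small, $\delta^{-1/m_i}\geq\delta^{-1/M}$, so \eqref{E:resolve} with $2\epsilon=1/M$ reduces to a uniform positive lower bound on the bracketed expression, independent of $\delta$ and $t_i$.

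A three-case analysis on $t_i$ will supply that bound. On $[0,t_0]$ one has $\phi\equiv 1$, so the bracket is at least $c$; on $[1,\infty)$ one has $\phi\equiv 0$ and $t_i^{m_i-1}\geq 1$, so the bracket is at least $m_i^2\geq 1$. The only delicate range is the transition $[t_0,1]$, where $\phi$ is bounded but may be negative while $t_i^{m_i-1}\geq t_0^{M-1}$; the negative $c\phi$-contribution is absorbed once $c<t_0^{M-1}/(2\|\phi\|_\infty)$. I expect this transition range to be the only real obstacle, and it is dispatched purely by choosing $c$ small relative to the fixed, $\delta$-independent data of $\chi$. Summing over $i$ and then invoking Theorem \ref{T:Mono} will conclude the proof.
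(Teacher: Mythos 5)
Your proposal is correct and follows essentially the same route as the paper: reduce via Theorem \ref{T:Mono} to verifying the Main Lemma for the diagonal model, build $\rho_\delta$ from scaled cutoffs $\chi(|z_i|^2/\delta^{1/m_i})$, and run the same three-case analysis on each coordinate (linear region, transition region absorbed by choosing $c$ small, and the region where the cutoff is flat and $|z_i|^{2m_i-2}/\delta$ dominates). Your normalization via $t_i$ and $\phi(t)=\chi'(t)+t\chi''(t)$ is just a cleaner bookkeeping of the paper's estimate for the coefficients $a_i$.
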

\begin{proof}
By Theorem \ref{T:Mono} it is enough to show that
there exist a neighborhood $U$ of the origin in $\mathbb C^n$, a constant $C>0$, and
a parameter $\epsilon$ with $0<\epsilon \leq \frac{1}{2}$ such that for each sufficiently small $\delta>0$
there exists a smooth function $\rho_{\delta}$ satisfying \textup{(i)} and \textup{(ii)}
for $\sum_{i=1}^n |z_i|^{2m_i}$ in Main Lemma.

Let $\chi$ be a smooth function such that
$0 \leq \chi(t) \leq 1 $ for $t\geq 0$,
$\chi(t)=t$ for $ t \leq \frac{1}{2}$,
and $\chi(t)  =0 $ for $ t\geq 1$.
Let us denote
$m=\max\{m_i : 1 \leq i \leq n \}$.
For each small $\delta >0$ define
$\tau_i(\delta)=\delta^{\frac{1}{2m_i}}$, $1\leq i \leq n$ and
\begin{equation}
\rho_{\delta}(z) = c \sum_{i=1}^n \chi \left( \frac{|z_i|^2}{(\tau_i(\delta))^2} \right),
\end{equation}
where $c>0$ is to be determined. Let $U \subset \subset V$ be neighborhood of the origin.
For $L= \sum_{i=1}^n s_i\ipa{i}$ and $z \in U$,
\begin{align}
&\ddba \left(\frac{\sum_i^n |z_i|^{2m_i}}{\delta} + \rho_{\delta}\right)(L,\bar L)(z)\notag\\
&= \ddba \left( \frac{\sum_{i=1}^n |z_i|^{2m_i}}{\delta} + c \sum_i^n \chi\left(
\frac{|z_i|^2}{(\tau_i(\delta))^2}\right) \right)(L,\bar L)(z) \notag \\
&=\sum_{i=1}^n a_i|s_i|^2,
\end{align}
where for $i=1,\dots,n$,
\begin{equation}\label{E:Coea}
a_i =\frac{|z_i|^{2m_i-2}}{\delta} + c \chi' \left(\frac{|z_i|^2}{(\tau_i(\delta))^2}\right)
\frac{1}{(\tau_i(\delta))^2} + c \chi''\left( \frac{|z_i|^2}{(\tau_i(\delta))^2}
\right)\frac{|z_i|^2}{(\tau_i(\delta))^4}.
\end{equation}
Since $a_i$ depends only on the $i$-th variable $z_i$, it is enough to estimate
the constant $a_i$ for each variable $z_i$. If $|z_i|^2 \geq (\tau_i(\delta))^2$, then since the last
two terms in \eqref{E:Coea} vanish, it follows that
\begin{equation}\label{E:Big}
a_i \geq \frac{(\tau_i(\delta))^{2m_i-2}}{\delta}=\delta^{-\frac{1}{m_i}}, \quad 1\leq i \leq n.
\end{equation}
If $|z_i|^2 \leq \frac{1}{2} (\tau_i(\delta))^2$, then since $\chi'(\frac{|z_i|^2}{(\tau_i(\delta))^2})=1$ and
$\chi''(\frac{|z_i|^2}{(\tau_i(\delta))^2})=0$, we have
\begin{equation}\label{E:Small}
a_i \geq c \frac{1}{(\tau_i(\delta))^2}=c \delta^{-\frac{1}{m_i}}, \quad 1 \leq i \leq n.
\end{equation}
Let us denote
\begin{equation}
M=\sup\{ |\chi'(t)|+ |\chi''(t)|: 0\leq t \leq 1\}.
\end{equation}
If $ \frac{1}{2} (\tau_i(\delta))^2 \leq |z_i|^2 \leq (\tau_i(\delta))^2$, then
\begin{equation}
a_i \geq (2^{1-m_i} -cM)\delta^{-\frac{1}{m_i}}.
\end{equation}
Since $m_i$ and $M$ are independent of $\delta$, we can choose a small number
$c>0$, independent of $\delta$, to make $2^{1-m_i}-cM$ to be positive for all $i=1,\dots,n$. Letting
$d=\min_{1\leq i \leq n} \{2^{1-m_i}-cM\}$, we have
\begin{equation}\label{E:Middle}
a_i \geq d\delta^{-\frac{1}{m_i}}, \quad 1\leq i \leq n.
\end{equation}
Combining \eqref{E:Big}, \eqref{E:Small}, and \eqref{E:Middle}, we conclude that there
exists a constant $C>0$, independent of $\delta$, so that $a_i \geq C\deltanto{1}{m_i}$, $1\leq i \leq n$,
for
each small $\delta$.  Hence, we complete the proof.
\end{proof}
\begin{corollary}\label{C:IronCross}
Let $v(z)$ be a plurisubharmonic function near the origin in $\mathbb C^n$ and let
$u(z)=v(z) + \sum_{i=1}^n|z_i|^{2m_i}$, where $m_i$'s are positive integers. If $\Omega$ be
a rigid domain with the mixed term $u(z)$ near origin in $\mathbb C^{n+1}$, then a subelliptic
estimate holds at the origin of order $\epsilon=\frac{1}{2m}$, where $m=\max_{1\leq i \leq n}m_i$.
\end{corollary}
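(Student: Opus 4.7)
My plan is to deduce this corollary as an essentially immediate consequence of Theorem \ref{T:AlmDiag}. The strategy is to exhibit the purely diagonal expression $\sum_{i=1}^n |z_i|^{2m_i}$ as a pointwise Levi-form lower bound for the mixed term $u$, so that the hypothesis \eqref{E:AlmDiag} is automatically satisfied and Theorem \ref{T:AlmDiag} applies directly with the given integers $m_1,\dots,m_n$.

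Concretely, I would first check that $u$ really is a valid mixed term for a rigid pseudoconvex domain in the sense of \eqref{E:Defining}. As a sum of $v$ (plurisubharmonic by hypothesis) and the functions $|z_i|^{2m_i}$ (each plurisubharmonic, being the modulus squared of a holomorphic monomial), $u$ is itself plurisubharmonic near the origin; the normalization $u(0)=0$ is harmless, since subtracting a constant from $v$ only shifts the domain along $\Real z_{n+1}$. Next, I would use linearity of $\ddba$ in the decomposition $u = v + \sum_{i=1}^n |z_i|^{2m_i}$ to write, for any vector field $L = \sum_{i=1}^n s_i \ipa{i}$ and any $z$ in a suitable neighborhood $V$ of the origin,
\[
\ddba u (L,\bar L)(z) \;=\; \ddba v (L,\bar L)(z) + \ddba \Bigl( \sum_{i=1}^n |z_i|^{2m_i} \Bigr)(L,\bar L)(z).
\]
Since $v$ is plurisubharmonic, the first term on the right is nonnegative, and therefore
\[
\ddba u (L,\bar L)(z) \;\geq\; \ddba \Bigl( \sum_{i=1}^n |z_i|^{2m_i} \Bigr)(L,\bar L)(z),
\]
which is exactly the pointwise inequality \eqref{E:AlmDiag} required to invoke Theorem \ref{T:AlmDiag}.

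Applying Theorem \ref{T:AlmDiag} with these $m_1,\dots,m_n$ then yields a subelliptic estimate at the origin of order $\epsilon = 1/(2\max_i m_i) = 1/(2m)$, which is the desired conclusion. There is no substantive obstacle here; the content of the corollary is the observation that adding an arbitrary plurisubharmonic perturbation $v$ cannot decrease the Levi form, so the monotonicity principle (Theorem \ref{T:Mono}), as refined through Theorem \ref{T:AlmDiag}, transfers the subelliptic estimate from the purely diagonal model $\sum_i |z_i|^{2m_i}$ to the perturbed mixed term $u = v + \sum_i |z_i|^{2m_i}$ without any loss in the order of the estimate.
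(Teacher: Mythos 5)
Your proposal is correct and is exactly the argument the paper intends (the paper states this corollary without proof as an immediate consequence of Theorem \ref{T:AlmDiag}): plurisubharmonicity of $v$ gives $\ddba u(L,\bar L) \geq \ddba\bigl(\sum_{i=1}^n |z_i|^{2m_i}\bigr)(L,\bar L)$, which is hypothesis \eqref{E:AlmDiag}, and Theorem \ref{T:AlmDiag} yields $\epsilon = \frac{1}{2m}$.
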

\begin{corollary}\label{C:Monomial}
Let $f_1,\dots, f_l$ be  monomials in $z=(z_1,\dots,z_n)$ and let $\Omega$  be a pseudoconvex domain in $\mathbb
C^{n+1}$ whose boundary defining function near the origin is
\begin{equation}
2\Real (z_{n+1}) + \sum_{j=1}^l |f_j(z)|^2.
\end{equation}
If $T(b\Omega,0) <\infty$, then the best subelliptic estimate at the origin for $\Omega$ is
\begin{equation}
\epsilon=\frac{1}{T(b\Omega,0)},
\end{equation}
\end{corollary}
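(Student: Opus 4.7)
The plan is to combine Catlin's necessary condition with Corollary \ref{C:IronCross}, mediated by a direct computation of $T(b\Omega,0)$. Catlin already gives $\epsilon\le 1/T(b\Omega,0)$ for any subelliptic gain, so it suffices to exhibit an estimate of order exactly $1/T(b\Omega,0)$.

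First, I would observe that finiteness of $T(b\Omega,0)$ forces the generating set to contain, after relabeling, pure powers $z_1^{m_1},\dots,z_n^{m_n}$ with each $m_i$ taken minimal. Indeed, if no $f_j$ were a pure power of $z_i$, every $f_j$ would vanish on the $z_i$-axis, and the curve $\gamma(t)=t\,e_i$ would give $v(\gamma^{\ast}r)=\infty$, contradicting finite type.

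Second, I would show $T(b\Omega,0)=2M$ where $M=\max_i m_i$. The axis curve $\gamma(t)=t\,e_i$ realizes the ratio $2m_i$ (all non-pure-power generators vanish on the $z_i$-axis), so $T(b\Omega,0)\ge 2M$. For the reverse, take any germ of parameterized analytic curve $\gamma=(\gamma_1,\dots,\gamma_{n+1})$ through $0$. Since $2\Real\gamma_{n+1}(t)$ contributes only purely holomorphic and antiholomorphic monomials in $t,\bar t$, while $\sum|f_j\circ\gamma|^2$ is built from mixed monomials $t^a\bar t^b$ with $a,b\ge 1$, no cancellation can occur between the two contributions, yielding
\[
v(\gamma^{\ast}r)\le 2\min_j v(f_j\circ\gamma).
\]
Picking $i_0$ with $v(\gamma_{i_0})=\min_i v(\gamma_i)$ and using the pure power $z_{i_0}^{m_{i_0}}$ among the generators,
\[
2\min_j v(f_j\circ\gamma)\le 2v\bigl(z_{i_0}^{m_{i_0}}\circ\gamma\bigr)=2m_{i_0}v(\gamma_{i_0})\le 2M\,v(\gamma),
\]
giving $T(b\Omega,0)\le 2M$.

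Finally, I would apply Corollary \ref{C:IronCross} to the decomposition
\[
u(z)=\sum_{i=1}^{n}|z_i|^{2m_i}+\sum_{j\in J}|f_j(z)|^2,
\]
where $J$ indexes the remaining non-pure-power generators. Each $|f_j|^2$ is plurisubharmonic, so the second sum is plurisubharmonic, and the corollary delivers a subelliptic estimate at the origin of order $\frac{1}{2M}=\frac{1}{T(b\Omega,0)}$, matching Catlin's upper bound and hence sharp. The main obstacle is the type computation above; the subtlety is accounting for the $2\Real z_{n+1}$ contribution, but once one notes that it produces only unmixed monomials in $t,\bar t$ (so cannot cancel any term from $\sum|f_j|^2$), everything reduces to an elementary computation on the monomial ideal $(f_1,\dots,f_l)$, followed by a direct invocation of Corollary \ref{C:IronCross}.
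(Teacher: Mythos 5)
Your proof is correct and follows essentially the same route as the paper: extract the minimal pure powers $z_i^{m_i}$ forced by finite type, apply the monotonicity result (the paper invokes Theorem \ref{T:AlmDiag} directly, you route through its Corollary \ref{C:IronCross}, which is the same thing since $\ddba v\ge 0$), and match against Catlin's necessary condition. You supply the type computation $T(b\Omega,0)=2\max_i m_i$ that the paper leaves as ``one can show'' (the paper's proof actually writes $\max_i m_i$, which cannot be right since it must equal the reciprocal of $\epsilon=\frac{1}{2\max_i m_i}$ from Theorem \ref{T:AlmDiag}). The only loose end is the case of a curve with $v(\gamma_{n+1})<\min_{1\le i\le n}v(\gamma_i)$ (e.g.\ $\gamma$ supported in the $z_{n+1}$-axis), where $v(\gamma)=v(\gamma_{n+1})<v(\gamma_{i_0})$ and your inequality $2m_{i_0}v(\gamma_{i_0})\le 2M\,v(\gamma)$ can fail; there one instead bounds $v(\gamma^{\ast}r)\le v(2\Real\gamma_{n+1})=v(\gamma)$ directly, using the same no-cancellation observation.
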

\begin{proof}
Since $T(b\Omega,0) < \infty$ and $f_1,\dots,f_l$ are monomials, it follows that some positive power of each
coordinate $z_i$, $i=1,\dots,n$, equals one of  $f_1,\dots,f_l$ up to constant. Let $m_i$ be the smallest one
among such powers of $z_i$. Then, the mixed term of the boundary defining function near the origin satisfies
\eqref{E:AlmDiag}. Furthermore, one can show that $T(b\Omega,0)=\max\{m_i \mid i=1,\dots,n\}$. Therefore, Theorem
\ref{T:AlmDiag} and \cite[Theorem 3]{Ca83} implies the corollary.
\end{proof}


\bibliography{thesisbib}

\end{document}